\theoremstyle{plain}
\newtheorem{theorem}{Theorem}
\newtheorem{lemma}[theorem]{Lemma}
\theoremstyle{definition}
\theoremstyle{remark}
\newcommand{\VTM}{\mbox{\hspace{.01in}{\tt vtm}}}
\newcommand{\Thue}{\mbox{\hspace{.01in}{\tt vtm}}}
\title{Squarefree words with interior disposable factors}
\author{
Marko Milosevic and Narad Rampersad\thanks{The second author is supported by
  NSERC Discovery Grant 2019-04111.}\\
Department of Mathematics and Statistics\\
University of Winnipeg\\
\url{n.rampersad@uwinnipeg.ca}
}
\date{\today}
\begin{document}
\maketitle

\begin{abstract}
We give a partial answer to a problem of Harju by constructing an infinite
ternary squarefree word $w$ with the property that for every $k \geq 3312$
there is an interior length-$k$ factor of $w$ that can be deleted while still
preserving squarefreeness.  We also examine Thue's famous
squarefree word (generated by iterating the map $0 \to 012$, $1 \to
02$, $2 \to 1$) and characterize the positions $i$ for which deleting
the symbol appearing at position $i$ preserves squarefreeness.
\end{abstract}

\section{Introduction}
The study of squarefree words (words avoiding non-empty repetitions
$xx$) is a fundamental topic in combinatorics on words.  Thue
\cite{Thu06} was the first to construct an infinite squarefree word on
three symbols.  Recently, Harju \cite{Har20} defined an interesting
class of squarefree word: \emph{irreducibly squarefree words}.  A
squarefree word is irreducibly squarefree if the deletion of any
letter in the word, other than the first and last letters, produces an
occurrence of a square.  Harju showed that there exist ternary
irreducibly squarefree words of all sufficiently large lengths.
Harju's notion of irreducibly squarefree words was inspired by a
similar concept introduced by Grytczuk, Kordulewski, and Niewiadomski
\cite{GKN20}, who defined \emph{extremal squarefree words} as follows:
a squarefree word is extremal if every possible insertion of a symbol
into the word creates an occurrence of a square.

Harju posed three open problems in his paper.  We give a partial
answer to his third problem here by constructing an infinite
squarefree word $w$ with the property that for every $k \geq 3312$
there is an interior (i.e., not a prefix) length-$k$ factor of $w$
that can be deleted while still preserving squarefreeness.  We also
examine Thue's famous squarefree word (generated by iterating the map
$0 \to 012$, $1 \to 02$, $2 \to 1$) \cite{Thu12} and characterize the
positions $i$ for which deleting the symbol appearing at position $i$
preserves squarefreeness.

\section{Preliminaries}
Let $A$ be a finite alphabet of letters.
For a word $w$ over $A$ (i.e., $w \in A^*$), let $|w|$ denote its length.
A word $u$ is a \emph{factor} of $w$, if $w=xuy$ where $x$ and/or $y$ may be empty.
If $x$ ($y$, resp.) is empty then $u$ is a \emph{prefix} (a \emph{suffix}, resp.)  of $w$.

A \emph{square} is a non-empty word of the form $u^2 = uu$.  A finite
or infinite word $w$ over $A$ is \emph{squarefree} if it does not have
any square factors.  A position $i$ in a squarefree word $w$ is said
to be \emph{disposable} if $w = uav$, $a \in A$, $|u|=i$, and the word
$uv$ is squarefree.  If $w=uxv$ and $uv$ are squarefree, then $x$ is a
\emph{disposable factor} of $w$.

A morphism $h\colon A^* \to A^*$ is said to be \emph{squarefree},
if it preserves squarefreeness of words, i.e., if $h(w)$ is squarefree for all squarefree words $w$.
A morphism $h\colon A^* \to A^*$ is \emph{uniform} if the images $h(a)$ have the same length:
$|h(a)|=n$ for all $a \in A$ and for some positive $n$ called the \emph{length} of~$h$.

An infinite word $w$ is a \emph{fixed point} of a morphism $h$ if $h(w)=w$. This happens
if $w$ begins with the letter $a$, and $w$ is obtained by iterating $h$ on
the first letter~$a$ of $w$: $h(a)=au$ and
$w=auh(u)h^2(u)\cdots$. In this case we denote the fixed point $w$ by~$h^\omega(a)$.

Let $T$ be the ternary alphabet $T = \{0,1,2\}$.
Let $\tau\colon T^* \to T^*$ be the morphism defined by
\begin{equation}\label{Hall-Thue}
\tau(0)=012, \  \tau(1)=02 \  \text{and} \ \tau(2)=1\,.
\end{equation}
The  word obtained by iterating $\tau$ on $0$ gives the following
infinite squarefree word:
\[
\Thue=012 02 1 012 1 02 012 \cdots \ (=\tau^\omega(0)).
\]
(Here we follow~\cite{BSCFR14} in using $\VTM$, for \emph{variant
  of the Thue--Morse word}, to denote this word.)
For the next basic result, see~\cite{Braunholtz,Hal64,Istrail,
MorseHedlund,Thu12} and~\cite{Lothaire}:

\begin{lemma}\label{lemma:Hall_no}
The word $\Thue$ is squarefree  and it does not contain $010$ or $212$ as factors.
\end{lemma}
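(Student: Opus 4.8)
The plan is to separate the two assertions---squarefreeness of $\Thue$, and the absence of the factors $010$ and $212$---since they call for quite different arguments. I would dispatch the forbidden-factor claim first, by a direct analysis of the block structure that $\tau$ imposes on $\Thue$, and then handle squarefreeness by showing that $\tau$ is a squarefree morphism and transferring this to the fixed point.

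For the forbidden factors, the crucial point is that because $\Thue=\tau(\Thue)$, the word is a concatenation of the blocks $\tau(0)=012$, $\tau(1)=02$, and $\tau(2)=1$, and each symbol plays a rigid role: every occurrence of $0$ is the first letter of a block ($\tau(0)$ or $\tau(1)$), every occurrence of $2$ is the last letter of a block, and the only block beginning with $1$ is $\tau(2)=1$. If $010$ occurred, its initial $0$ would start a block that must be $\tau(0)=012$ (in $\tau(1)=02$ the $0$ is followed by $2$, not $1$); but then the symbol two positions later is the terminal $2$ of that block, not $0$---a contradiction. If $212$ occurred, its central $1$ is preceded by $2$, so it cannot be the interior $1$ of $\tau(0)=012$ (that $1$ is always preceded by $0$); hence it is a whole block $\tau(2)=1$, forcing the following $2$ to begin a block, which is impossible since no block begins with $2$. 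Thus neither factor appears.

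For squarefreeness I would first prove that $\tau$ is a squarefree morphism, and then observe that $\tau^n(0)$ is squarefree for all $n$ by induction (the single letter $0$ is squarefree and $\tau$ preserves squarefreeness), so that $\Thue$, each of whose finite factors occurs in some prefix $\tau^n(0)$, is squarefree. To show $\tau$ squarefree I would take an arbitrary squarefree word $s$ and argue that $\tau(s)$ has no square. Short squares are ruled out by a finite check: the images $\tau(a)$, $\tau(ab)$, $\tau(abc)$ of squarefree words of bounded length are all squarefree (for instance $\tau(01)=01202$ and $\tau(02)=0121$, and similarly for the remaining finitely many cases). For a hypothetical long square $xx$ I would exploit a \emph{synchronization} property: starting from any $0$, the block boundaries are forced (after a $0$ the next letter distinguishes $\tau(0)$ from $\tau(1)$; a $1$ preceded by $2$ is a $\tau(2)$-block; every $2$ closes a block), so any factor long enough to contain a $0$ has a uniquely determined block decomposition, while squarefreeness of $s$ bounds the length of $0$-free runs. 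Hence the two copies of a sufficiently long $x$ must align with the block structure, the period $|x|$ must be an integral sum of block lengths, and the square descends to a square in the preimage $s$, contradicting the squarefreeness of $s$.

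The main obstacle is exactly this final pullback: since $\tau$ is \textbf{non-uniform} (block lengths $3,2,1$), the alignment of the two occurrences of $x$ with the block boundaries is not automatic and must be argued with care, especially in the medium-length cases where $x$ spans more than one block but the boundary effects are not yet negligible. A clean alternative that avoids this bookkeeping is to invoke Crochemore's test for squarefree morphisms on a three-letter alphabet, which reduces squarefreeness of $\tau$ to checking that $\tau(w)$ is squarefree for the finitely many squarefree words $w$ of bounded length; I would fall back on this shortcut if the direct synchronization argument proves too delicate.
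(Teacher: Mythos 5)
A preliminary remark on context: the paper does not prove this lemma at all --- it is quoted as a classical result with references to Thue, Hall, Braunholtz, Istrail, Morse--Hedlund, and Lothaire. So your proposal has to be judged on its own merits rather than against an argument in the paper. Your treatment of the forbidden factors is fine: since $\Thue=\tau(\Thue)$, every $0$ opens a block, every $2$ closes one, and a $1$ preceded by $2$ can only be the block $\tau(2)$, so neither $010$ nor $212$ can occur. That half of the argument is correct and self-contained.

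The squarefreeness half, however, contains a fatal error: the morphism $\tau$ is \emph{not} a squarefree morphism. The word $010$ is squarefree, yet
\[
\tau(010)=012\cdot 02\cdot 012=01202012
\]
contains the square $2020$. So the ``finite check'' you describe fails already at length $3$, the induction ``$\tau$ preserves squarefreeness, hence $\tau^n(0)$ is squarefree'' collapses at its base, and the proposed fallback to Crochemore's test cannot rescue anything --- that test would simply detect this same counterexample and (correctly) declare $\tau$ non-squarefree. The deeper point is that the two assertions of the lemma cannot be decoupled in the way you propose: the absence of $010$ (and $212$) is exactly what saves squarefreeness, since the only squares $\tau$ can create in the image of a squarefree word arise from occurrences of $010$, as the example shows. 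The standard repair is to prove both properties \emph{simultaneously} by induction, establishing that $\tau$ maps any squarefree word avoiding $010$ and $212$ to a squarefree word avoiding $010$ and $212$, and then applying this to the prefixes $\tau^n(0)$; your synchronization analysis of block boundaries would be the right tool for that strengthened inductive step, but as written your argument proves a false intermediate claim.
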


Note that this lemma implies that $\Thue$ does not contain $1021$ or
$1201$ as factors, since the only way these could arise are as factors of
$\tau(212) = 1021$ or $\tau(00)= 012012$.

\section{Disposable positions in $\Thue$}

Here is a list of the first few disposable positions in $\Thue$:
\[
  (0, 2, 12, 18, 44, 50, 60, 66, 76, 82, 108, 114, 140, 146, 172, 178,
  188, 194, 204, \ldots)
\]
and here is a list of the first few first differences of the above sequence:
\[
  (2, 10, 6, 26, 6, 10, 6, 10, 6, 26, 6, 26, 6, 26, 6, 10, 6, 10, 6,
  26, 6, 10, 6, 10, 6, 26, \ldots).
\]
The goal of this section is to give an exact description of these two
sequences.

The letter in position $0$ of $\VTM$ is trivially disposable.  The
disposability of the $2$ in position $2$ of $\Thue$ is easy to verify.
Deleting this letter produces a new word $01w'$, where $w'$ is a
squarefree suffix of $\VTM$. If a square occurs, it must begin from
the first letter $0$, or the second letter $1$. In either case, a
factor $1021$ or $010$ is found in the first half of the
square. Since $\VTM$ avoids both factors, this leads to a
contradiction, so we conclude that this occurrence of 2 is disposable.

\begin{theorem}\label{dispo_0}
The second and fourth occurrences of $0$ in a factor
$\tau(10121) = 02\mathbf{0}12021\mathbf{0}2$ of $\VTM$ are
disposable in $\VTM$.
\end{theorem}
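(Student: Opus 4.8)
The plan is first to locate the factor and reinterpret the two marked letters structurally. Because $10121$ already occurs in $\VTM$ (for instance at positions $5$--$9$ of $\tau^3(0)=012021012102$) and $\tau(\VTM)=\VTM$, its image $\tau(10121)=0201202102$ is automatically a factor of $\VTM$, and a short computation locates it at positions $10$--$19$. Writing $\tau(10121)=\tau(1)\tau(0)\tau(1)\tau(2)\tau(1)$, the second $0$ is the leading letter of the block $\tau(0)=012$ and the fourth $0$ is the leading letter of the final block $\tau(1)=02$. So the two assertions are that deleting the leading $0$ of that $\tau(0)$-block, which locally replaces $\ldots\tau(1)\tau(0)\tau(1)\ldots=\ldots 02\,012\,02\ldots$ by $\ldots 02\,12\,02\ldots$, and deleting the leading $0$ of that $\tau(1)$-block, which locally replaces $\ldots\tau(2)\tau(1)\tau(0)\ldots=\ldots 1\,02\,012\ldots$ by $\ldots 1\,2\,012\ldots$, each preserve squarefreeness.

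For either deletion, write $\VTM=uav$ with $a$ the deleted $0$, so the word to be tested is $uv$. Since $u$ is a prefix and $v$ a suffix of the squarefree word $\VTM$, both are squarefree, and hence any square $ss$ appearing in $uv$ must straddle the junction between $u$ and $v$. I would parametrize such a square by $\ell=|s|$ together with the position of the junction inside $ss$, and split according to whether the junction lies in the first copy of $s$, exactly between the two copies, or in the second copy. In the first and third cases one copy of $s$ lies entirely on one side of the junction and is therefore a genuine factor of $\VTM$; in the middle case a suffix of $u$ equals a prefix of $v$, so $\VTM$ contains the \emph{near-square} $sas$. In every case matching the two copies forces a definite overlap across the junction.

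The short straddling squares can then be dispatched by direct inspection around the two junctions, using the local pictures $\ldots 02\,12\,02\ldots$ and $\ldots 1\,2\,012\ldots$ together with a little surrounding context: each candidate would force, inside the copy of $s$ lying wholly within $\VTM$, one of the factors $010$, $212$, $1021$, or $1201$, contradicting Lemma~\ref{lemma:Hall_no} (and its consequence, noted just after the lemma, that $\VTM$ avoids $1021$ and $1201$). The delicate case --- and the step I expect to be the main obstacle --- is a long straddling square. Here I would exploit the factorization of $\VTM$ into the $\tau$-blocks $012$, $02$, $1$: since the deletion disturbs only a single block, a long copy of $s$ is built almost entirely of complete blocks, so matching it against the other copy either exposes one of the forbidden factors at a block boundary or lets one desubstitute $\tau$ and descend to a strictly shorter square in $\VTM$, which cannot exist. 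The real work is the bookkeeping of this alignment --- the offset of the junction relative both to the period $\ell$ and to the block boundaries --- and extracting from it an a priori bound on $\ell$; once that bound is in hand, only finitely many squares remain, and these reduce to the short-square inspection already carried out, settling both deletions.
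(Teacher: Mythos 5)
Your setup agrees with the paper's --- since $u$ and $v$ are squarefree, any square $ss$ in $uv$ must straddle the junction, and the forbidden factors $010$, $212$, $1021$, $1201$ of Lemma~\ref{lemma:Hall_no} are the right tools --- but the proof is not complete. The case you yourself flag as ``the main obstacle,'' a long straddling square, is left as a plan (``I would exploit\dots'', ``the real work is the bookkeeping\dots''): neither the a priori bound on $\ell$ nor the desubstitution is actually carried out, and desubstitution is delicate here precisely because the copy of $s$ containing the junction is \emph{not} a factor of $\VTM$, so its $\tau$-block structure is broken at the deletion point. The paper needs none of this. After deleting the second $0$ the junction reads $\cdots 012102$ followed by $12021\cdots$, which contains occurrences of $212$ and $1021$ at fixed small offsets from the junction; hence for \emph{every} square $ss$ with $|s|\ge 4$ whose junction lies strictly inside one copy of $s$, that copy contains one of these forbidden factors, and the period $|s|$ transports the same factor into the other copy, which lies wholly inside $u$ or $v$ and is therefore a genuine factor of $\VTM$ --- a contradiction uniformly in $\ell$. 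In other words, your ``short-square'' argument, done once near the junction, already disposes of all lengths; there is no separate long case, and the route you propose for it is both unexecuted and harder than necessary.

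Two further gaps. First, the alignment with the junction exactly at the midpoint of $ss$ is only set up in your sketch ($\VTM$ contains the near-square $sas$) and never discharged; note that in this case \emph{both} copies of $s$ lie wholly in $\VTM$, so the forbidden-factor transport argument gives nothing, and an extra idea is required. The paper supplies it by asking which letter follows $x0x$ in $\VTM$: not $0$ (else $(x0)(x0)$ is a square), not $2$ (since $x$ ends in $2$), hence $1$, and then $x1$ ends in the forbidden $1021$. Second, the theorem is about every occurrence of $\tau(10121)$, and your local picture for the fourth $0$, namely $\ldots 1\,02\,012\ldots$, uses the letters \emph{following} $\tau(10121)$, which are outside the given factor; the paper justifies this by first establishing that $\tau(10121)$ always occurs in the context $\tau(02)\tau(10121)\tau(02)$. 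You would need to prove that context claim (or confine your junction pictures to letters inside the factor) before your argument applies to all occurrences rather than the single one at positions $10$--$19$.
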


\begin{proof}
Every disposable letter divides a squarefree word
into a squarefree prefix consisting of all letters before the
disposable one, and a squarefree suffix consisting of all letters
afterwards. For the remainder of this proof, let $u$ denote the
aforementioned prefix, and $v$ the corresponding suffix. Then $uv$ is
the word obtained by deleting the disposable letter.

We first notice that every factor $\tau(10121)$ is enclosed by
$\tau(02)$. That is, $\tau(10121)$ always occurs in the context
$$\tau(02)\tau(10121)\tau(02) =
012102\mathbf{0}12021\textbf{0}20121.$$ After deleting the second $0$ in
$\tau(10121)$, we have $uv = w'01210212021w''$, where $u = w'012102$
and $v = 12021w''$. Since any potential square would have to start in
$u$ and end in $v$, to ensure that every occurrence of this $0$ is
disposable, we must verify that, no matter which letter in $u$ we
start with, no square occurs.

It is clear that $01210212021$ (and hence $uv$) is squarefree for all
squares of length at most six, and any square in $uv$ of length at
least eight that crosses the boundary between $u$ and $v$ contains a
factor $212$ or $1021$ in each half of the square, which is a
contradiction, since $\VTM$ avoids $212$ and $1021$. The only possible
exception is a square of the form $xx = 1y1021y102$, where $y \in T^*$
and $x = 1y102$ is both a prefix of $v$ and a suffix of $u$.  Now, the
square $xx$ was produced by deleting the 0 from $x0x$ which implies
that 0 does not immediately follow the square. The only other option
is $1$. This gives us $xx1 = 1y1021y1021$; a contradiction again since
$\VTM$ avoids $1021$.

The argument for the fourth 0 in $\tau(10121)$ is similar.
\end{proof}

\begin{theorem}\label{dispo_2}
The first and third occurrences of $2$ in a factor $\tau(12101) =
0\mathbf{2}10201\mathbf{2}02$ of $\VTM$ are disposable in $\VTM$.
\end{theorem}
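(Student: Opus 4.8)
The plan is to deduce this statement from Theorem~\ref{dispo_0} by exploiting a mirror symmetry of $\VTM$, since $\tau(12101)$ is exactly the reverse of the $(0\leftrightarrow 2)$-swap of $\tau(10121)$. Write $x^R$ for the reversal of a word $x$ and $\overline{x}$ for its image under the letter exchange $0 \leftrightarrow 2$ (fixing $1$), and set $\sigma(x) = \overline{x^R}$, the anti-automorphism that reverses a word and swaps $0$ and $2$. The first thing I would record is the intertwining
\[
\sigma(\tau(w)) = \tau(w^R) \qquad \text{for all } w \in T^*,
\]
which holds because each block $\tau(0)=012$, $\tau(1)=02$, $\tau(2)=1$ is fixed by $\sigma$ (equivalently $\tau(a)^R = \overline{\tau(a)}$), so reversal can be pushed through $\tau$ at the cost of turning $\sigma$ into plain reversal.

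The core step, and the main obstacle, is to show that the set $L$ of factors of $\VTM$ is closed under $\sigma$. Granting the intertwining, $\sigma(\tau^n(0)) = \tau((\tau^{n-1}(0))^R)$, so $\sigma$-closure reduces to closure of $L$ under reversal; I would obtain the latter from the same intertwining together with the recognizability of $\tau$ (each factor has an essentially unique $\tau$-preimage), which is also what pins down the enclosures below. Once $\sigma$-closure is in hand, the reduction is purely formal: $\sigma$ is a length-preserving bijection of $L$ onto itself that carries squares to squares and intertwines single-letter deletion with deletion at the mirror-image position. Hence deleting the letter at a given occurrence destroys squarefreeness if and only if the same holds at the $\sigma$-mirrored occurrence, so an occurrence is disposable exactly when its $\sigma$-image is.

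It then remains to match the positions. One computes $\sigma(\tau(10121)) = \tau(12101)$ and $\sigma(\tau(02)) = \tau(20)$, so the enclosure $\tau(02)\,\tau(10121)\,\tau(02)$ of Theorem~\ref{dispo_0} is carried to the enclosure $\tau(20)\,\tau(12101)\,\tau(20) = 1012\,0210201202\,1012$. Under $\sigma$ the local position $p$ in $\tau(10121)$ is sent to position $11-p$ in $\tau(12101)$, so the second and fourth occurrences of $0$ (local positions $3$ and $9$) correspond precisely to the third and first occurrences of $2$ (local positions $8$ and $2$). Since every occurrence of $\tau(12101)$ in $\VTM$ is the $\sigma$-image of an occurrence of $\tau(10121)$, Theorem~\ref{dispo_0} yields the disposability of the first and third $2$ claimed here.

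Alternatively, one can bypass the global symmetry and mirror the proof of Theorem~\ref{dispo_0} verbatim: using the enclosure $\tau(20)\,\tau(12101)\,\tau(20)$, deleting the first $2$ gives $uv = w'\,10120102012021012\,w''$, where squares of length at most six fail by inspection, any longer crossing square contains in each half a factor forbidden by Lemma~\ref{lemma:Hall_no} and the remark following it (namely $010$, $212$, $1021$, or $1201$), and the single exceptional boundary square is excluded as before by showing that its forced continuation would create a factor avoided by $\VTM$; the third $2$ is handled symmetrically. In either route the delicate point is the same one flagged above, namely establishing that $\tau(12101)$ occurs only in the context $\tau(20)\,\tau(12101)\,\tau(20)$.
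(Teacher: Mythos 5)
Your fallback route is essentially the paper's own proof: the paper establishes the enclosure $\tau(20)\tau(12101)\tau(20) = 10120\mathbf{2}10201\mathbf{2}021012$, checks short squares by inspection, kills long crossing squares via the forbidden factors $010$ and $1201$, and disposes of the exceptional boundary square $1y1201y120$ by the forced-continuation argument; your summary of that route is accurate, and the one point you flag as delicate (that $\tau(12101)$ occurs only in that context) is asserted at the same level of detail in the paper itself.

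Your primary route, however, has a genuine gap exactly where you place it. The reduction of Theorem~\ref{dispo_2} to Theorem~\ref{dispo_0} via $\sigma(x)=\overline{x^R}$ is sound \emph{once} one knows the factor set $L$ of $\VTM$ is closed under $\sigma$: the intertwining $\sigma(\tau(w))=\tau(w^R)$ is correct (each of $012$, $02$, $1$ is $\sigma$-fixed), squares map to squares, the position bookkeeping ($3\mapsto 8$, $9\mapsto 2$) checks out, and non-disposability transfers through $\sigma$ applied to a sufficiently long factor containing the created square. But your proposed derivation of the closure is circular. The intertwining converts $\sigma$-closure into reversal-closure of $L$; to go back you write $u^R$ as a factor of $\tau(w)^R=\overline{\tau(w^R)}$, which requires closure of $L$ under the exchange $0\leftrightarrow 2$ in addition to the (inductive) reversal-closure of shorter words, and that exchange-closure is never addressed --- it is in turn equivalent, via $E\tau=\rho\tau\rho$, to the reversal-closure you are trying to prove. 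The closure statement is true, but it needs an independent input: either a Walnut verification, a careful simultaneous induction on reversal- and exchange-closure (with attention to the fact that $|\tau(2)|=1$, so preimages under recognizability are not automatically shorter), or an appeal to the classical reversal-closure of the Thue--Morse language transferred to $\VTM$. Without one of these, the symmetry route does not stand on its own, and the theorem is carried entirely by your fallback, i.e., by the paper's direct argument.
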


\begin{proof}
  We proceed in a similar manner to Theorem~\ref{dispo_0}. Let $uv$ be
  as described in the proof of that theorem.  First, we
  have that $\tau(12101)$ only occurs in the context
  $$\tau(20)\tau(12101)\tau(20) =
  10120\mathbf{2}10201\mathbf{2}021012.$$ Deleting the first 2 in
  $\tau(12101)$ gives us $uv = w'10120102012w''$, where $u = w'10120$
  and $v = 102012w''$. Indeed, the factor $10120102012$ (and hence
  $uv$) avoids squares of length at most six. So any potential square
  in $uv$ must have length at least eight, but then it contains $010$
  or $1201$ in each half of the square, which is not possible, since
  $\VTM$ does not contain $010$ or $1201$. However, we must consider
  the exception $xx = 1y1201y120$, where $y \in T^*$ and $x=1y120$ is
  a prefix of $v$ and a suffix of $u$.  Then the deletion of $2$ from
  $x2x$ implies that a $1$ must occur immediately after the square,
  giving us $xx1 = 1y1201y1201$, which is a contradiction, since
  $\Thue$ does not contain $1201$.

  The argument for the third 2 in $\tau(12101)$ is similar.
\end{proof}

Theorems~\ref{dispo_0} and \ref{dispo_2} only show that certain
positions in $\Thue$ are disposable and do not indicate which
positions are not disposable.  We can completely characterize the
disposable positions in $\Thue$ using the computer program Walnut
\cite{Walnut}.

The word $\Thue$ is a $2$-automatic sequence (see \cite{Ber79}) and is
generated by the automaton in Figure~\ref{vtm_aut} (the labels of each
state indicate the output associated with that state).

  \begin{figure}[htb]
    \centering
    \includegraphics[scale=0.75]{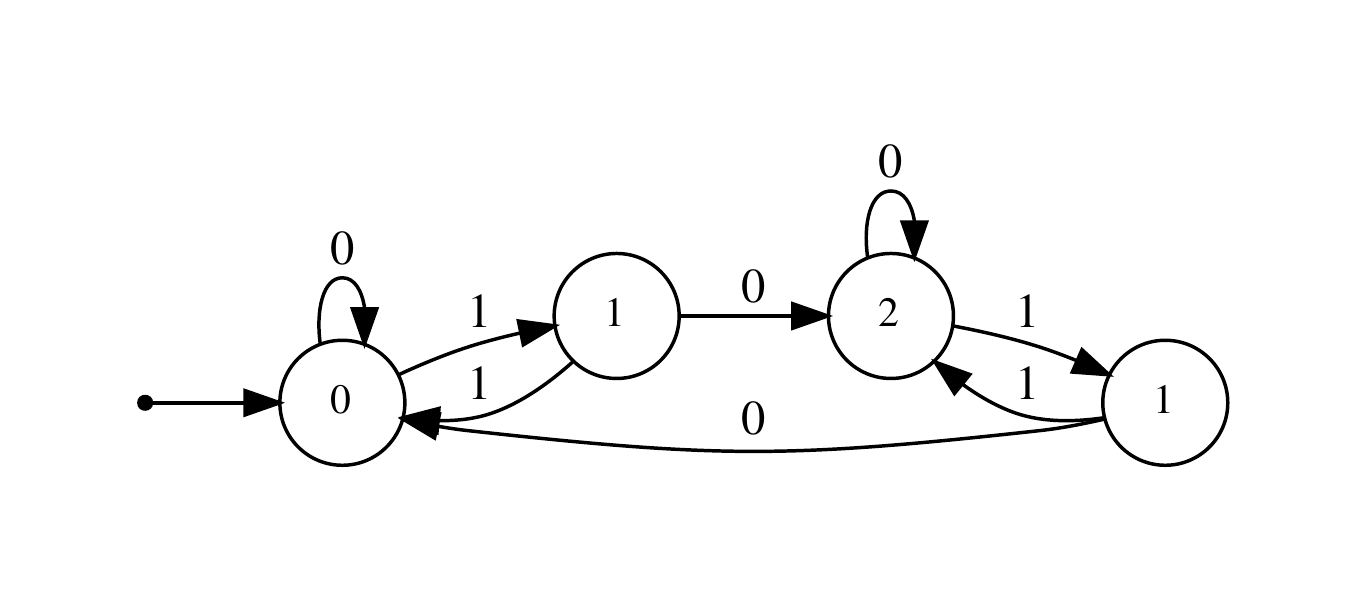}
    \caption{$2$-DFAO for $\VTM$}\label{vtm_aut}
  \end{figure}

  Since $\VTM$ is an automatic sequence, we can use
  Walnut~\cite{Walnut} to verify that it has certain combinatorial
  properties.  The following Walnut command computes the set of disposable
  positions $j$ in $\Thue$.  The output automaton is given in
  Figure~\ref{dispo_pos}.

  \begin{verbatim}
  eval dispo_pos "?msd_2 Ai,n (i < j & j < i+2*n) => (Ek i <=
    k & ((j < i+n & k <= i+n) | (j >= i+n & k < i+n)) & (((j < k | j >
    k+n) & VTM[k] != VTM[k+n]) | ((k < j & j <= k+n) & VTM[k] !=
    VTM[k+n+1])))";
  \end{verbatim}
  
  \begin{figure}[htb]
    \centering
    \includegraphics[scale=0.5]{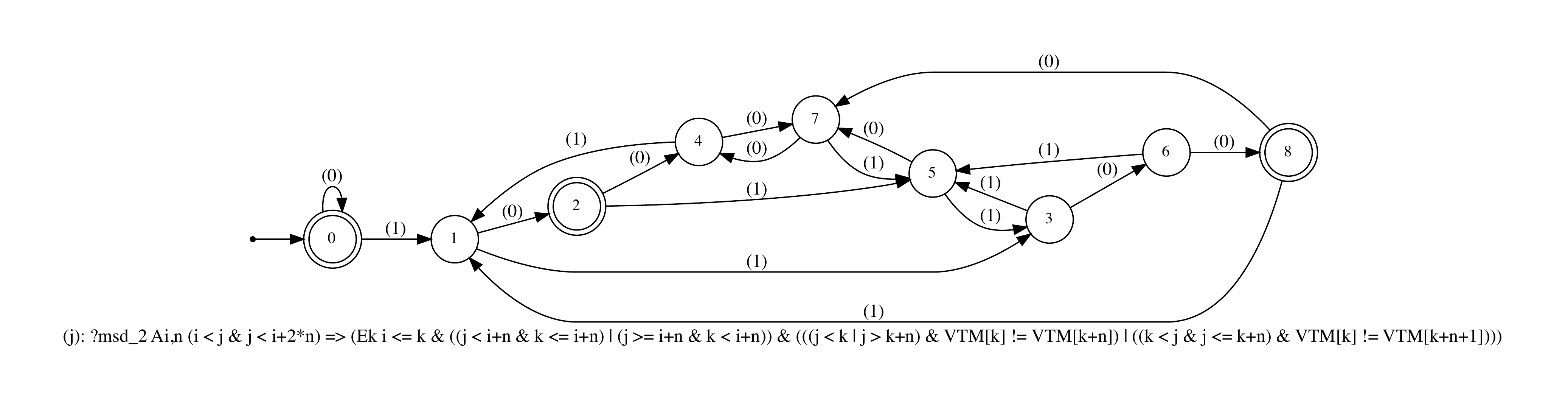}
    \caption{\texttt{dispo\_pos} output automaton}\label{dispo_pos}
  \end{figure}

  The next command computes the set of values taken by the \emph{first
    difference} sequence of the sequence of disposable positions in
  $\Thue$ (excluding the initial position).
  The output automaton is given in Figure~\ref{dispo_delta}.
  
   \begin{verbatim}
  eval dispo_delta "?msd_2 Ei,j i >=2 & j > i & j = i+l &
  $dispo_pos(i) & $dispo_pos(j) & (Ak (i<k & k<j) =>
  ~$dispo_pos(k))";
  \end{verbatim}

  \begin{figure}[htb]
    \centering
    \includegraphics[scale=0.75]{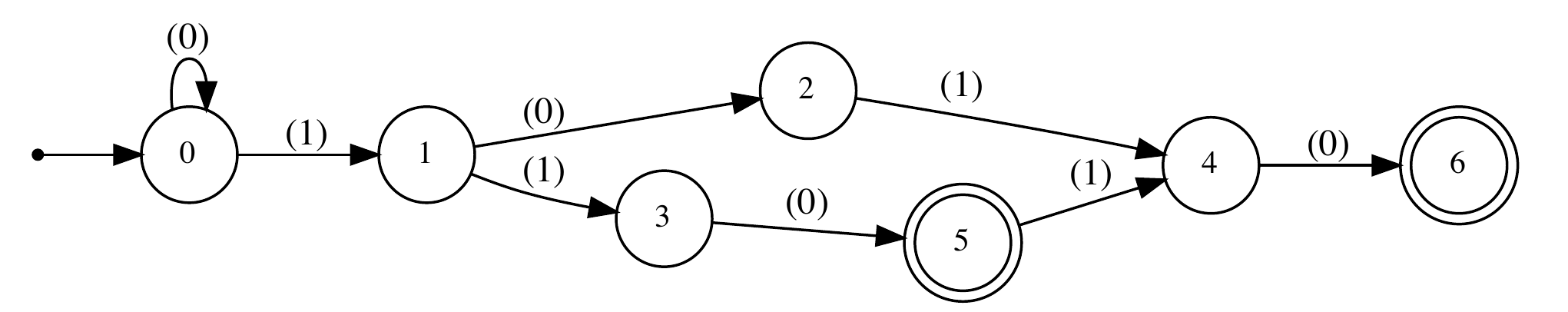}
    \caption{\texttt{dispo\_delta} output automaton}\label{dispo_delta}
  \end{figure}

  From Figure~\ref{dispo_delta}, we see that the ``gaps'' between
  disposable positions (excluding the initial position) in $\Thue$ are
  $6$, $10$, and $26$.  We next consider the \emph{density} of the
  disposable positions in $\VTM$.  Let $D_w(n)$ denote the set of
  disposable positions $\leq n+1$ of an infinite squarefree word $w$.
  Figure~\ref{dispo_plot} shows a plot of the initial values of
  $|D_{\VTM}(n)|/n$.

    \begin{figure}[htb]
    \centering
    \includegraphics[scale=1]{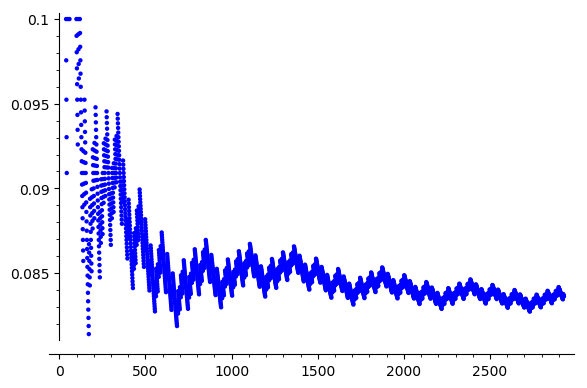}
    \caption{Plot of $|D_{\VTM}(n)|/n$}\label{dispo_plot}
  \end{figure}

  We would like to determine the quantity
  $\lim_{n \to \infty} |D_{\VTM}(n)|/n$ if it exists, or failing that,
  the quantities $\liminf_{n \to \infty} |D_{\VTM}(n)|/n$ and
  $\limsup_{n \to \infty} |D_{\VTM}(n)|/n$.  There is a general method
  for this due to Bell \cite{Bell20}; however, due to the structure of
  the automaton in Figure~\ref{dispo_pos}, we are able to employ
  simpler techniques.

  \begin{theorem}
    The density of disposable positions in $\VTM$ is
    \[ \lim_{n \to \infty} |D_{\VTM}(n)|/n = 1/12 = 0.08\dot{3}.\]
  \end{theorem}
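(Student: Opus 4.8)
The plan is to route the computation through the gap structure already exhibited by the \texttt{dispo\_delta} automaton. Write the disposable positions of $\VTM$ as $0 = p_0 < p_1 < p_2 < \cdots$ and set $g_i = p_i - p_{i-1}$ for the $i$-th gap, so that $p_n = \sum_{i=1}^n g_i$ and $|D_{\VTM}(p_n)|$ equals $n+1$ up to the harmless offset in the definition of $D_{\VTM}$. Since Figure~\ref{dispo_delta} shows every gap lies in $\{6,10,26\}$, in particular $6 \le g_i \le 26$, the counting function is essentially the inverse function of $n \mapsto p_n$: if the Ces\`aro average $\tfrac1n\sum_{i=1}^n g_i$ converges to a limit $\bar g$, then $p_n = \bar g\,n + o(n)$, and for $N$ in the bounded-length window $p_n \le N < p_{n+1} = p_n + O(1)$ one gets $|D_{\VTM}(N)|/N = (n+1)/N \to 1/\bar g$ \emph{for all} $N$, with no need to restrict to a subsequence. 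Thus it suffices to prove that the average gap exists and equals $12$. Since the only gap values are $6$, $10$, and $26$, this average is $6\,\nu_6 + 10\,\nu_{10} + 26\,\nu_{26}$, where $\nu_g$ is the asymptotic frequency of the gap value $g$, and the target reduces to showing $\nu_6 = \tfrac12$ and $\nu_{10} = \nu_{26} = \tfrac14$, for then $6\cdot\tfrac12 + 10\cdot\tfrac14 + 26\cdot\tfrac14 = 3 + \tfrac{36}{4} = 12$.

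First I would pin down $\nu_6 = \tfrac12$. Theorems~\ref{dispo_0} and~\ref{dispo_2} produce disposable positions in \emph{pairs}: the two distinguished occurrences of $0$ in each factor $\tau(10121)$, and the two occurrences of $2$ in each factor $\tau(12101)$, are disposable and lie exactly $6$ apart. Since the \texttt{dispo\_pos} computation confirms that these pairs, together with the initial positions $0$ and $2$, exhaust the disposable positions, the gap sequence has, after a finite transient, the form $(\ldots,\,6,\,b_1,\,6,\,b_2,\,6,\,b_3,\,\ldots)$, in which the intra-pair gap $6$ strictly alternates with a ``big'' inter-pair gap $b_i \in \{10,26\}$. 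Exactly half of the gaps therefore equal $6$, giving $\nu_6 = \tfrac12$, and what remains is to show that the two values $10$ and $26$ occur with equal frequency in the big-gap sequence $b_1, b_2, \ldots$, which would force $\nu_{10} = \nu_{26} = \tfrac12\cdot\tfrac12 = \tfrac14$.

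The remaining and principal task is the frequency of the big-gap sequence. Because the positions at which $\tau(10121)$ and $\tau(12101)$ occur in $\VTM$ form $2$-automatic sets (they are definable in Walnut from the automaton for $\VTM$), the big-gap sequence $(b_i)$ is automatic, and equivalently the whole set of disposable positions is generated by the \texttt{dispo\_pos} automaton of Figure~\ref{dispo_pos}. I would read off from that automaton its base-$2$ transition matrices $M_0, M_1$ and study $M = M_0 + M_1$, a nonnegative integer matrix with constant row sum $2$, so that $M\mathbf{1} = 2\mathbf{1}$. Letting $\mathbf{u}_m$ record, for each state, the number of length-$m$ binary inputs (leading zeros allowed) that reach that state, one has $\mathbf{u}_{m+1} = M^{\mathsf{T}}\mathbf{u}_m$ and $|D_{\VTM}(2^m)| = \mathbf{a}^{\mathsf{T}}\mathbf{u}_m$, where $\mathbf{a}$ marks the accepting states. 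If $2$ is a simple eigenvalue of $M$ that strictly dominates the rest of the spectrum, then $\mathbf{u}_m/2^m$ converges to the normalised left Perron eigenvector $\boldsymbol{\pi}$, the letter frequencies of $(b_i)$ exist, and the corresponding inner product should evaluate to $\nu_{10} = \nu_{26} = \tfrac14$.

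The main obstacle is precisely this spectral step. For a general automatic sequence the frequency of a value need not exist, so one must verify from the explicit (small) automaton that the subdominant eigenvalues of $M$ have modulus strictly less than $2$: this primitivity is exactly what guarantees that the Ces\`aro averages converge, that the density is a genuine limit rather than merely a $\liminf$ and a $\limsup$, and that the ``structure of the automaton'' advertised above lets us avoid the full machinery of \cite{Bell20}. Once the eigenvector computation yields $\nu_{10} = \nu_{26} = \tfrac14$, combining with $\nu_6 = \tfrac12$ gives average gap $12$, and the inverse-function argument of the first paragraph delivers $\lim_{n\to\infty}|D_{\VTM}(n)|/n = 1/12$.
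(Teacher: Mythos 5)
Your plan contains the paper's actual proof as a subroutine, but the scaffolding you build around it has two genuine problems. The paper's argument is exactly your third paragraph with the gap detour stripped out: form the adjacency matrix $M$ of the \texttt{dispo\_pos} automaton of Figure~\ref{dispo_pos} (restricted to states $1$ through $8$), check that $M^5$ is positive so that $M$ is primitive with Perron--Frobenius eigenvalue $2$, and conclude that the normalized left eigenvector $\mathbf{v} = (1/12, 1/24, 1/6, 1/8, 1/4, 1/12, 5/24, 1/24)$ records the asymptotic fraction of input strings reaching each state; summing over the accepting states $2$ and $8$ gives $1/24 + 1/24 = 1/12$ directly. Note that this inner product \emph{is} the density of disposable positions, not the frequency $\nu_{10}$ or $\nu_{26}$ of the big gaps: the quantity $\mathbf{a}^{\mathsf{T}}\mathbf{u}_m/2^m$ you propose to evaluate counts accepted positions below $2^m$, so the one computation you actually specify yields $1/12$ and then feeds it into a reduction that was only needed because you declined to read off $1/12$ directly. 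The detour through the average gap is circular.

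The second problem is the claim that the big-gap sequence $(b_i)$ is automatic ``because the positions at which $\tau(10121)$ and $\tau(12101)$ occur form $2$-automatic sets.'' The set of disposable positions is indeed $2$-automatic, but the sequence of gaps \emph{indexed by rank} $i$ is not automatically a $2$-automatic sequence in $i$; passing from an automatic subset of $\N$ to its increasing enumeration requires a synchronization argument you have not supplied, and the \texttt{dispo\_delta} automaton only certifies that the set of gap \emph{values} is $\{6,10,26\}$ --- it carries no information about their frequencies. Parts of your setup are genuinely worth keeping: the bounded-gap observation correctly upgrades convergence along $N = 2^m$ to convergence of $|D_{\VTM}(N)|/N$ for all $N$, a point the paper glosses over, and the alternation $6, b_1, 6, b_2, \ldots$ is consistent with the listed first differences. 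But as written the route through $\nu_6$, $\nu_{10}$, $\nu_{26}$ never closes: the existence of those frequencies is exactly what is left unproved, while the direct Perron--Frobenius evaluation you embed in the middle of the argument already finishes the proof without them.
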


  \begin{proof}
    Let $M$ be the adjacency matrix of the automaton given in
    Figure~\ref{dispo_pos}, restricted to just the states $1$ to $8$.
    That is, let $M$ be the $8 \times 8$ matrix whose $ij$-entry is
    equal to the number of transitions from state $i$ to state $j$.
    We have
    \[ M=
    \left(\begin{array}{rrrrrrrr}
0 & 1 & 1 & 0 & 0 & 0 & 0 & 0 \\
0 & 0 & 0 & 1 & 1 & 0 & 0 & 0 \\
0 & 0 & 0 & 0 & 1 & 1 & 0 & 0 \\
1 & 0 & 0 & 0 & 0 & 0 & 1 & 0 \\
0 & 0 & 1 & 0 & 0 & 0 & 1 & 0 \\
0 & 0 & 0 & 0 & 1 & 0 & 0 & 1 \\
0 & 0 & 0 & 1 & 1 & 0 & 0 & 0 \\
1 & 0 & 0 & 0 & 0 & 0 & 1 & 0
          \end{array}\right).
      \]      

    We can verify that $M^5$ is a positive matrix, which implies that
    $M$ is a \emph{primitive} matrix.  The Perron--Frobenius
    eigenvalue of $M$ is $2$, and it follows from the standard
    Perron--Frobenius theory (see \cite[Chapter~8]{AS03} for a
    treatment formulated in terms of morphisms rather than automata)
    that the $i$-th entry of the left eigenvector
    \[ {\bf v} = (1/12, 1/24, 1/6, 1/8, 1/4, 1/12, 5/24, 1/24) \] of
    $M$, where we have normalized ${\bf v}$ so that its entries sum to
    $1$, gives the fraction of all input strings that reach state $i$.
    Since the final states of the automaton in Figure~\ref{dispo_pos}
    are $2$ and $8$, it follows that the fraction of all input strings
    that reach a final state is the sum of the entries of ${\bf v}$ in
    positions $2$ and $8$.  We conclude that the frequency of
    disposable positions in $\VTM$ is
    $1/24 + 1/24 = 1/12 = 0.08\dot{3}$.
    \end{proof}
    
\section{Words with longer interior disposable factors}

Harju \cite[Problem~3]{Har20} asked if there exists an infinite
ternary squarefree word containing interior disposable factors of every length
$k \geq 1$.  We are unable to prove that this is the case, but we can
prove the following weaker result.

\begin{theorem}\label{dispo_3312}
  There exists an infinite ternary squarefree word containing interior
  disposable factors of every length $k \geq 3312$.
\end{theorem}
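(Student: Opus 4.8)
The goal is to build an infinite ternary squarefree word $w$ in which, for every $k \geq 3312$, some interior length-$k$ factor can be deleted while preserving squarefreeness. Let me think about how to leverage what's already established.

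The theorems proved earlier (Theorems 2 and 3) give us *single disposable positions* scattered throughout $\VTM$, with controlled gaps of 6, 10, and 26. A disposable factor of length $k$ is a window that can be removed leaving a squarefree word. The key structural idea I'd pursue: I want to construct a word where I have *control over long stretches* so that an entire block can be excised cleanly.

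Let me think about the mechanism. If I have a squarefree word of the form $u \cdot z \cdot v$ where deleting $z$ yields $uv$ squarefree, then $z$ is a disposable factor of length $|z|$. To get ALL lengths $\geq 3312$, I need a flexible family of such excisable blocks of every sufficiently large length.

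A natural approach: find squarefree words $u, v$ with the property that for a whole *range* of words $z$ (of consecutive lengths), $uzv$ is squarefree AND $uv$ is squarefree. The cleanest way to achieve a range of lengths is to insert squarefree words $z$ that "interpolate" between $u$ and $v$ and that can be lengthened. For instance, if $uv$ itself is squarefree, I need $z$ such that $uzv$ is also squarefree — then $z$ is disposable. So the task reduces to: fix a squarefree "context" $uv$, and for each $k \geq 3312$ exhibit a squarefree word $z_k$ of length $k$ such that $uz_kv$ is squarefree (with the deletion boundary placed between $u$ and $v$). Then all these can be *simultaneously* present as interior factors of one infinite word, provided I can concatenate/overlay them consistently.

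The construction I'd aim for: use a squarefree morphism (in the spirit of $\tau$) to blow up a cleverly chosen finite squarefree word into an infinite one, arranging that the image contains, for each long $k$, a factor of the form $u z_k v$ with $uv$ squarefree. Morphic images are the right tool because squarefree morphisms let me lift squarefreeness of a short preimage to squarefreeness of a long image, and the structure of morphic words lets me guarantee that a given "pattern" (context $u,v$ surrounding an adjustable-length middle) appears. The bound $3312$ almost certainly comes from the length of the morphism's images times the length of some specific squarefree "seed" patterns: the threshold is where the interpolation argument first becomes valid for every residue.

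**The main obstacle.** The hard part will be the *uniformity over all $k$*: it is relatively easy to get disposable factors of lengths in an arithmetic progression (as Theorems 2–3 effectively do, via the gaps 6, 10, 26), but hitting *every* integer $\geq 3312$ requires combining blocks whose lengths can be adjusted by $+1$ freely. I expect the real work to be a careful case analysis producing, for each residue class modulo the morphism length (or modulo a small period like $6$), an explicit squarefree filler $z_k$ realizing that length, together with a verification — likely delegated in part to Walnut or to an appeal to a squarefree-morphism lemma — that inserting $z_k$ into the fixed context never creates a square. The specific constant $3312$ strongly suggests the authors reduce to finitely many small cases checked by computer and one general interpolation argument valid above the threshold; my plan would be to isolate that threshold explicitly and then discharge the finitely many boundary cases computationally.
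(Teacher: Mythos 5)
Your proposal is a plan rather than a proof: it correctly identifies the shape of the problem (find a squarefree context $uv$ and, for every $k\geq 3312$, a length-$k$ filler $z_k$ with both $uz_kv$ and $uv$ squarefree) and correctly names the central obstacle (a single uniform squarefree morphism only yields disposable factors whose lengths lie in one arithmetic progression, so some device is needed to adjust lengths by $+1$). But you do not supply either of the two ideas that actually make the paper's argument work, and without them the plan does not go through. First, the paper does not work directly over three letters: it interleaves a binary word $x$ whose only squares are $00$, $11$, $0101$ (Fraenkel--Simpson) with a squarefree word $y$ over a disjoint alphabet $\{2,3,4\}$, producing factors $aY_1bY_2cY_3d$ with $a,b,c,d\in\{0,1\}$, $b\neq c$, $abcd\neq 0101$, $Y_i\in\{2,3,4\}^n$. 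Deleting $Y_2$ cannot create a square because the juxtaposition $bc$ occurs nowhere else and any square straddling it would force a square of length $\geq 4$ other than $0101$ in $x$. This is what manufactures a disposable block of \emph{every} length $n$ in a controlled context; your proposal gives no comparable mechanism for producing the fillers $z_k$, and over a ternary alphabet one cannot simply insert an arbitrary squarefree $z$ between $u$ and $v$ and expect $uzv$ to be squarefree.

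Second, and more importantly, your stated obstacle is resolved in the paper not by ``a case analysis per residue class checked by computer'' but by composing two morphisms of different characters: Brandenburg's $18$-uniform squarefree morphism $h_5$ down to three letters (turning the length-$n$ blocks into length-$18n$ disposable blocks), followed by a \emph{multi-valued, non-uniform} squarefree morphism $g$ whose images of a single letter have lengths $23$, $24$, $25$, and $26$. Because $g(h_5(a))$ realizes every length in $[414,468]$, the image of a length-$18n$ block realizes every length in $[414n,\,414n+54n]$, and arranging $414$ occurrences of each block yields all lengths $414n+i$ with $i\leq\min\{413,54n\}$; these intervals tile all of $\{k: k\geq 3312\}$ once $54n\geq 413$, which is where the constant $3312=414\cdot 8$ comes from. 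Your guess that $3312$ arises as ``morphism length times seed length'' and that the boundary cases are discharged computationally is not what happens (the $1792$ missing smaller lengths are simply left open). As written, your proposal identifies the right target but contains no construction, so it has a genuine gap at both of these steps.
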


\begin{proof}
  Let $A = \{0,1\}$, $B = \{2,3,4\}$, and $C = A \cup B$.
Fraenkel and Simpson \cite{FS95} showed that there exists an infinite
word over $A$ whose only square factors are $00$, $11$, and
$0101$ (see \cite{HN06} for a simpler construction).  Let $x$ be any
such infinite word and let $y$ be any infinite squarefree word over
$B$.  We are going to start by constructing an infinite squarefree word
$v$ over $C$ by interleaving the words $x$ and $y$ in a very
particular way.

For $n \geq 1$, let $Q(n)$ denote any word of the
form $aY_1bY_2cY_3d$ where
\begin{itemize}
\item $a,b,c,d \in A$,
\item $b \neq c$,
\item $abcd \neq 0101$,
\item $Y_i \in B^*$, and
\item $|Y_i| = n$.
\end{itemize}
We now construct $v$ as follows:
\begin{itemize}
\item Taking one symbol at a time, interleave $x$ and $y$ in the following way,
$$x_0y_0x_1y_1x_2y_2\cdots x_iy_ix_{i+1}y_{i+1}x_{i+2}y_{i+2}x_{i+3}\cdots$$
until we get $414$ occurrences of $Q(1)$.
\item Then start taking two symbols at a time from $y$, so at a certain point we have
$$\cdots x_jy_jy_{j+1}x_{j+1}y_{j+2}y_{j+3}x_{j+2}y_{j+4}y_{j+5}x_{j+3}\cdots$$
until we get $414$ occurrences of $Q(2)$.
\item Then take three symbols at a time from $y$:
$$\cdots x_py_qy_{q+1}y_{q+2}x_{p+1}y_{q+3}y_{q+4}y_{q+5}x_{p+2}\cdots$$
until we get $414$ occurrences of $Q(3)$, etc.
\end{itemize}
This gives an infinite squarefree word
\begin{equation*}
    \begin{split}
        v = x_0y_0x_1y_1x_2y_2\cdots & x_iy_ix_{i+1}y_{i+1}x_{i+2}y_{i+2}x_{i+3}\cdots\\\cdots x_jy_jy_{j+1}&x_{j+1}y_{j+2}y_{j+3}x_{j+2}y_{j+4}y_{j+5}x_{j+3}\cdots\\\cdots &x_py_qy_{q+1}y_{q+2}x_{p+1}y_{q+3}y_{q+4}y_{q+5}x_{p+2}\cdots
    \end{split}
  \end{equation*}
over the alphabet $C$. The word $v$ therefore has the form
$$v = \cdots Q(1)\cdots Q(1)\ \cdots\  Q(2)\cdots Q(2)\ \cdots\ Q(n)\cdots Q(n)\cdots$$
where, for each $n$, there are $414$ occurrences of $Q(n)$.  Note that
although we have shown the $Q(n)$ above as being non-overlapping, they
may indeed overlap each other; this poses no problems for the argument
below.

Since $y$ is squarefree, the word $v$ is as well.  Furthermore, we
claim that for any $n \geq 1$, the factor $Y_2$ in any given
$Q(n) = aY_1b\ Y_2\ cY_3d$ can be removed and the resulting word
$\hat{v}$ is squarefree.  To see this, suppose to the contrary that
$\hat{v}$ contains a square $uu$. Since $bc$ does not occur anywhere
else in $\hat{v}$, the first $u$ must end at the $b$ in $aY_1bcY_3d$
and the second $u$ must begin at the $c$.  Since $b \neq c$, the word
$u$ must contain at least two letters from $A$.  If $u$ contains
exactly two letters from $A$, then $ abcd$ is a square in $x$.
However, the only square of length $4$ in $x$ is $0101$, and this
contradicts the hypothesis that $abcd \neq 0101$.  If $u$ contains
more than two letters from $A$, then $x$ contains a square of length
$\geq 6$, which is a contradiction.

Let $h_5 : C^* \rightarrow \{0,1,2\}^*$ be the following $18$-uniform
morphism given by Brandenburg \cite[Theorem~4]{Brandenburg}.
\begin{equation*}
h_5:\left. \ \ \begin{aligned}
        & 0 \rightarrow 010201202101210212 \\
        & 1 \rightarrow 010201202102010212 \\
        & 2 \rightarrow 010201202120121012 \\
        & 3 \rightarrow 010201210201021012 \\
        & 4 \rightarrow 010201210212021012 \\
    \end{aligned}
    \right.
\end{equation*}
Brandenburg proved that $h_5$ is a squarefree morphism; i.e., it maps
squarefree words to squarefree words.  Let $v' = h_5(v)$.  The
infinite word $v'$ is squarefree, and furthermore, since $h_5$ is a
squarefree morphism, we see that any factor $h_5(Y_2)$
occurring in the context $h_5(Q(n)) = h_5(aY_1b\ Y_2\ cY_3d)$ can be
deleted from $v'$ and the resulting word is also squarefree.  Note
that $|h_5(Y_2)| = 18n$.

The next step in the construction is to apply the following squarefree
multi-valued morphism $g$ from \cite[Theorem~20]{CHOR19}:
\begin{align*}
g(0)=&
\begin{cases}
012102120210\mathbf{12}021201210\\
012102120210\mathbf{201}021201210\\
012102120210\mathbf{2012}021201210\\
012102120210\mathbf{20121}021201210,
\end{cases}\\
\end{align*}
$g(1)=\pi(g(0))$, and $g(2)=\pi(g(1))$, where $\pi$ is the permutation
$(0\, 1\, 2)$.  Note that the images of each
letter have lengths $23$, $24$, $25$, or $26$.  Consequently,
for any $a \in C$, the words in the set $g(h_5(a))$ all have
lengths between $18 \times 23 = 414$ and $18 \times 26 = 414+54$ and
all such lengths are obtained.

Let $w$ be the infinite word in $g(v')$ obtained as follows.  When
applying $g$ to $v'$, in general, we choose to replace each letter of
$v'$ with its image under $g$ of length $23$, except in the following
situation.

For each $n \geq 1$ and each $i \in \{0,\ldots,\min\{413, 54n\}\}$,
when applying $g$ to the $i$-th occurrence of $h_5(Q(n)) =
h_5(aY_1bY_2cY_3d)$, replace $h_5(Y_2)$ with any word $Z$ in $g(h_5(Y_2))$
of length $414n+i$.  Since $g$ is a squarefree multi-valued morphism,
the word $Z$ is a disposable factor of $w$ of length
$414n+i$.  The set of lengths of all such disposable factors is
therefore
\[
  L = \bigcup_{n \geq 1} \{414n + i : i \leq \min\{413, 54n\}\} =
  \{k : k \geq 3312\} \cup \bigcup_{n=1}^7 \{414n + i : i \leq 54n\}.
\]
This completes the proof; we note in conclusion that there are $1792$
lengths missing from the set $L$.
\end{proof}

Some of the missing lengths from the proof of Theorem~\ref{dispo_3312}
can be obtained by the following observation due to Harju: if $w$ is
an infinite squarefree word and there is some factor $p$ and some
letter $a$ for which we
can write $w = apaw'$, then $pa$ is disposable, since the resulting
word $aw'$ is a suffix of $w$ and hence is squarefree.  Thus, for
every such $p$, we can add the length $|pa|$ to the list of lengths of
disposable factors of $w$.  Note that to explicitly calculate these
additional lengths for a word $w$ constructed as described in the
proof of Theorem~\ref{dispo_3312}, we would have to make some explicit
choices for the word $x$ and the word $y$ used in the proof, as well
as an explicit rule for choosing the word $Z$ in $g(h_5(Y_2))$.

\section{Conclusion}
An obvious open problem is to completely resolve Harju's question by
improving the construction of Theorem~\ref{dispo_3312} so that there
are interior disposable factors of every length.  Harju \cite{Har20} also
stated two other very interesting open problems in his paper, which we
have not been able to solve.

Regarding the disposable positions in $\VTM$, the main property of
$\VTM$ that allowed us to identify the disposable positions was that
$\VTM$ avoids $010$ and $212$.  This places it in one of the three
classes of squarefree words characterized by Thue \cite{Thu12}: 1)
those avoiding $010$ and $212$; 2) those avoiding $010$ and $020$;
and, 3) those avoiding $121$ and $212$.  It might be interesting to
study disposable positions in words from classes 2) and 3).

We also found that the set of disposable positions in $\VTM$ is fairly
\emph{dense}: the density of disposable positions in $\VTM$ is $1/12$.
Let $D_w(n)$ denote the set of disposable positions $\leq n+1$ of an
infinite squarefree word $w$.  What is the greatest possible value of
$\liminf_{n \to \infty} |D_w(n)|/n$ over all infinite ternary
squarefree words $w$?  Is it achieved by $\VTM$?


\begin{thebibliography}{99}
\bibitem{AS03}
  J.-P. Allouche, J. Shallit, \textit{Automatic Sequences: Theory, 
    Applications, Generalizations}, Cambridge, 2003.
  
\bibitem{Bell20}
  J. Bell, ``The upper density of an automatic set is rational''.
  Preprint, \url{https://arxiv.org/abs/2002.07256}~.
  
\bibitem{Ber79}
  J. Berstel, ``Sur la construction de mots sans carr\'e'',
  S\'eminaire de Th\'eorie des Nombres 1978–1979, Exp. No. 18, 15 pp.,
  CNRS, Talence, 1979.

\bibitem{BSCFR14}
  F. Blanchet-Sadri, J. Currie, N. Fox, N. Rampersad, ``Abelian
  complexity of fixed point of morphism $0\mapsto 012$, $1\mapsto 02$,
  $2\mapsto 1$'', INTEGERS 14 (2014), A11.

\bibitem{Brandenburg}
F.-J. Brandenburg, ``Uniformly growing $k$-th power-free
homomorphisms'', Theoret. Comput. Sci. 23 (1983), 69--82.

\bibitem{Braunholtz}
  C. Braunholtz, ``An infinite sequence of three symbols with no
  adjacent repeats'', Amer. Math. Monthly 70 (1963), 675--676.
  

\bibitem{CHOR19}
J. Currie, T. Harju, P. Ochem, N. Rampersad, ``Some further results on
squarefree arithmetic progressions in infinite words'',
Theoret. Comput. Sci. 799 (2019), 140--148.

\bibitem{FS95}
A. S. Fraenkel, J. Simpson, ``How many squares must a binary sequence
contain?'', Electron. J. Combinatorics 2 (1995) \#R2.

\bibitem{GKN20}
  J. Grytczuk, H. Kordulewski, A. Niewiadomski, ``Extremal squarefree
  words''.  Preprint, \url{https://arxiv.org/abs/1910.06226}~.
  
\bibitem{Hal64}
  M. Hall, Jr., ``Generators and relations in groups--The Burnside
  problem''.  In \textit{Lectures on Modern Mathematics, Vol.\ 2},
  Wiley, New York, pp.\ 42--92.

\bibitem{Har20}
  T. Harju, ``Disposability in square-free words''.  Preprint:
  \url{https://arxiv.org/abs/1911.09973} (2020).
  
\bibitem{HN06}
  T. Harju, D. Nowotka, ``Binary words with few squares'', TUCS
  Technical Report No. 373, January 2006.

  
\bibitem{Istrail}
  S. Istrail, ``On irreducible languages and nonrational numbers'',
  Bull Math. Soc. Sci. Math. R. S. Roumanie (N.S.) 21(69)(3-4) (1977),
  301--308.
  
\bibitem{Lothaire}
  M. Lothaire, \textit{Combinatorics on Words}, vol.~17 of
  Encyclopedia of Mathematics, Addison-Wesley, Reading, MA, 1983.

\bibitem{MorseHedlund}
  M. Morse and G. Hedlund, ``Unending chess, symbolic dynamics and a
  problem in semigroups'', Duke Math. J. 11 (1944), 1--7.
  
\bibitem{Walnut} H. Mousavi, ``Automatic theorem proving in
  Walnut''. Preprint: \url{https://arxiv.org/abs/1603.06017} (2016).
  Walnut Prover itself is available at
  \url{https://github.com/hamousavi/Walnut}.

\bibitem{Thu06}
  A. Thue, ``\"Uber unendliche Zeichenreihen'', Norske Vid. Skrifter I. Mat.-Nat. Kl., Christiana
  7 (1906), 1--22.
  
\bibitem{Thu12}
  A. Thue, ``\"Uber die gegenseitige Lage gleicher Teile gewisser
  Zeichenreihen'', Norske Vid. Skrifter I. Mat.-Nat. Kl., Christiana
  1 (1912), 1--67.
\end{thebibliography}
\end{document}